\newtheorem{thm}[equation]{Theorem}
\numberwithin{equation}{section}
\newtheorem{diag}[equation]{Diagram}
\begin{document}
\raggedbottom \voffset=-.7truein \hoffset=0truein \vsize=8truein
\hsize=6truein \textheight=8truein \textwidth=6truein
\baselineskip=18truept
\def\cal{\mathcal}
\def\vareps{\varepsilon}
\def\mapright#1{\ \smash{\mathop{\longrightarrow}\limits^{#1}}\ }
\def\mapleft#1{\smash{\mathop{\longleftarrow}\limits^{#1}}}
\def\mapup#1{\Big\uparrow\rlap{$\vcenter {\hbox {$#1$}}$}}
\def\mapdown#1{\Big\downarrow\rlap{$\vcenter {\hbox {$\ssize{#1}$}}$}}
\def\on{\operatorname}
\def\span{\on{span}}
\def\cat{\on{cat}}
\def\a{\alpha}
\def\bz{{\Bbb Z}}
\def\gd{\on{gd}}
\def\imm{\on{imm}}
\def\sq{\on{Sq}}
\def\sspan{\on{span}^0}
\def\rank{\on{rank}}
\def\eps{\epsilon}
\def\br{{\Bbb R}}
\def\bc{{\Bbb C}}
\def\bh{{\Bbb H}}
\def\tfrac{\textstyle\frac}
\def\w{\wedge}
\def\b{\beta}
\def\A{{\cal A}}
\def\P{{\cal P}}
\def\zt{{\Bbb Z}_2}
\def\bq{{\Bbb Q}}
\def\ker{\on{ker}}
\def\coker{\on{coker}}
\def\u{{\cal U}}
\def\e{{\cal E}}
\def\exp{\on{exp}}
\def\wbar{{\overline w}}
\def\xbar{{\overline x}}
\def\ybar{{\overline y}}
\def\zbar{{\overline z}}
\def\ebar{{\overline e}}
\def\nbar{{\overline n}}
\def\mbar{{\overline m}}
\def\ubar{{\overline u}}
\def\ext{\on{Ext}}
\def\ni{\noindent}
\def\coef{\on{coef}}
\def\den{\on{den}}
\def\gd{{\on{gd}}}
\def\ev{{\text{ev}}}
\def\od{{\text{od}}}
\def\N{{\Bbb N}}
\def\Z{{\Bbb Z}}
\def\Q{{\Bbb Q}}
\def\R{{\Bbb R}}
\def\C{{\Bbb C}}
\def\notimm{\not\subseteq}
\def\tmf{\on{tmf}}
\title[New nonimmersion results]
{Some new nonimmersion results for real projective spaces}

\author{Donald M. Davis}
\address{Department of Mathematics, Lehigh University\\Bethlehem, PA 18015, USA}
\email{dmd1@lehigh.edu}
\date{January 28, 2011}

\keywords{Immersions, projective space, topological modular forms}
\subjclass[2000]{57R42, 55N20.}

\maketitle
\begin{abstract} We use the spectrum tmf to obtain new nonimmersion results
for many real projective spaces $RP^n$ for $n$ as small as 113. The only new ingredient
is some new calculations of tmf-cohomology groups. We present an expanded table of nonimmersion results.
Our new theorem is new for 17\% of the values of $n$ between $2^i$ and $2^i+2^{14}$ for $i\ge15$.
 \end{abstract}
\section{Introduction}\label{intro} We use the spectrum $\tmf$ to prove the following new nonimmersion theorem
for real projective spaces $P^n$.
\begin{thm}\label{mainthm} Let $\a(n)$ denote the number of 1's in the binary expansion of $n$.
\begin{enumerate}
\item[a.] If $\a(M)=3$, then $P^{8M+9}$ does not immerse in ($\not\subseteq$) $\br^{16M-1}$.
\item[b.] If $\a(M)=6$, then $P^{8M+9}\notimm\br^{16M-11}$.
\item[c.] If $\a(M)=7$, then $P^{16M+16}\notimm\br^{32M-7}$ and $P^{16M+17}\notimm\br^{32M-6}$.
\item[d.] If $\a(M)=9$, then $P^{32M+25}\notimm\br^{64M-4}$ and $P^{32M+26}\notimm\br^{64M-3}$.
\item[e.] If $\a(M)=10$, then $P^{16M+17}\notimm\br^{32M-20}$ and $P^{16M+18}\notimm\br^{32M-19}$.
\end{enumerate}
\end{thm}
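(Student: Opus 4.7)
The plan is to follow the reduction developed in the author's earlier papers on the subject, which asserts that an immersion $P^n\subseteq\br^{n+k}$ forces the vanishing of a specific element of $\tmf^*(P^b_a)$ for an appropriate stunted projective space (with $a$ and $b$ determined by $n$ and $n+k$ in the standard way, roughly $a=n+k+1$ and $b=2n+1$). This reduction comes from the James axial-map construction applied to $\tmf$. Consequently the proof of each of (a)--(e) reduces to exhibiting the corresponding obstruction class as nonzero, and by the abstract this is precisely the only new content of the paper.

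For each case I would first identify the stunted projective space $P^b_a$ and the obstruction element dictated by the reduction theorem. For example, in (a) with $n=8M+9$ and $n+k=16M-1$, the relevant complex is $P^{16M+19}_{16M}$ (up to the usual $\tmf$-Thom twist), and the target class lives in degree controlled by $n$. I would then compute the relevant portion of $\tmf^*(P^b_a)$ from the long exact sequence of the cofibration $P^{a-1}\hookrightarrow P^b\to P^b_a$, the known description of $\tmf^*(P^\infty)$, and the $\tmf$-based Adams spectral sequence, in which the $v_2$-periodicity of $\tmf$ is the feature responsible for these results being beyond the reach of $ko$ or $BP\langle 1\rangle$.

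The hypothesis $\a(M)=c$, for $c\in\{3,6,7,9,10\}$, is used at the input stage: it pins down the $2$-adic valuations of the binomial-type coefficients that appear in the formulas for the $\tmf^*(P^\infty)$ generators, and hence controls both the order and the Adams filtration of the candidate obstruction class. For the twin statements in (c), (d) and (e), I would show that both consequences fall out of the same underlying calculation applied to two closely related stunted complexes; note that they are not deducible from one another by the trivial restriction $P^{n+1}\subseteq\br^{n+k}\Rightarrow P^n\subseteq\br^{n+k}$, since in each pair the codimension also shifts by one.

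The main obstacle will be the explicit $2$-adic arithmetic: one must correctly locate the candidate generator in $\tmf^*(P^b_a)$, compute its order, and verify that it is not killed by any hidden $2$-extension or higher Adams differential before mapping nontrivially to the relevant quotient. The combinatorial constraint on $\a(M)$ must be invoked precisely at this point to ensure the surviving class sits in exactly the correct filtration. Once this is done, the nonimmersion follows from the reduction theorem with no further geometric input.
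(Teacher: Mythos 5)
Your proposal sets up the obstruction in the wrong place, and then stops before any of the actual content of the theorem. The reduction the paper uses (from \cite{BDM} and ultimately James) is: an immersion $P^n\subseteq\br^N$ yields an axial map $P^n\times P^{2^{L}-N-2}\to P^{2^{L}-n-2}$, and applying $\tmf^*$ sends a power of the $8$-dimensional class $X$ that vanishes in the target to $\sum_i\binom{e}{i}X_1^iX_2^{e-i}$ in $\tmf^*(P^m\w P^{m'})$ --- the smash product of \emph{two} projective spaces, which by $S$-duality and the periodicity $P^{t+8}_{b+8}\w\tmf\simeq\Sigma^8P^t_b\w\tmf$ is identified with a small group such as $\tmf_{30}(P_6\w P_6)$. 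Your proposed home for the obstruction, a single stunted complex $P^{2n+1}_{n+k+1}$ (e.g.\ $P^{16M+19}_{16M}$ in case (a)), is not where this class lives; you cite no reduction theorem placing an obstruction there, and the "James axial-map construction" you invoke does not produce one. A single-space formulation of immersion obstructions does exist in the literature (via geometric dimension and modified Postnikov towers, or the desuspension approach of \cite{BD}), but it is a genuinely different machine, and you would have to build that bridge rather than assert it.

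More seriously, even granting a correct setup, everything that makes the theorem true is missing. The proof of each part consists of: (i) computing the relevant portion of $\ext_{A_2}(H^*(P_a\w P_b),\zt)$ and arguing no differentials hit it; (ii) matching the filtration-$0$ generators with specific monomials $X_1^iX_2^j$, using relations such as $2^2X^{M+1}=0$ in $\tmf^*(P^{8M+9})$ to kill the outer terms, and invoking the hidden extensions of \cite[Thm 2.7]{BDM} (e.g.\ that $2^4(g_1+g_2)$ jumps filtration) to control the middle ones; and (iii) the $2$-adic valuations $\nu\bigl(\binom{2M}{M}\bigr)=\a(M)$ and $\nu\bigl(\binom{-M-1}{M}\bigr)=\a(M)$ etc., which is precisely where the hypotheses $\a(M)\in\{3,6,7,9,10\}$ enter and which determine the specific Euclidean dimensions $16M-1$, $16M-11$, $\dots$ in the statement. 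Your proposal acknowledges that these computations are "the main obstacle" but performs none of them, so as written it is a description of a strategy rather than a proof; and the strategy itself, as described, points at the wrong spaces.
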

We apply the same method that was used in \cite{BDM}, using $\tmf^*(-)$ to detect nonexistence of axial maps.
The novelty here is that we compute and utilize groups $\tmf^*(P^m\w P^n)$ when $m$ and/or $n$ is odd.
In \cite{BDM}, only even values of $m$ and $n$ were studied. There is, however, no significant difference
or complication in using the odd values. We prove Theorem \ref{mainthm} in Section \ref{pfsec}.

For many years, the author has maintained a website (\cite{tbl}) which listed all known immersion, nonimmersion,
embedding, and nonembedding results for $P^n$ and tabulated them for $n=2^i+d$ with $2^i>d$ and $0\le d\le 63$.
In \cite{KW}, W.~Stephen Wilson acknowledged how this table motivated him to try (and succeed) to prove nonimmersions
for small $P^n$. Our Theorem \ref{mainthm}(a.) includes $P^{2^i+49}\notimm\br^{2^{i+1}+79}$ and $P^{2^i+57}\notimm
\br^{2^{i+1}+95}$ for $i\ge6$, which improve on previous best results (of \cite{KW}) by 1 and 2 dimensions, respectively,
and hence enter the table \cite{tbl}.

To facilitate checking whether results are new, the author has greatly expanded his table of nonimmersion results
at {\tt www.lehigh.edu/}$\sim${\tt dmd1/imms.html}. We have listed there the best known nonimmersions for $P^{2^i+d}$
for $2^i>d$ and $0\le d\le 16,\!383$ together with the first acknowledged source. A listing of and link to the {\tt Maple} program
that generated this table is also included there. This table gives all known nonimmersion results for $P^n$ with
$7<n<49,\!152$ except for James' nonimmersions of $P^{2^e-1}$ in dimension $2^{e+1}-2e-\langle 3,2,2,4\rangle$ if $e\equiv \langle0,1,2,3\rangle$ mod 4.(\cite{J})

Theorem \ref{mainthm} appears 2796 times in this table, thus giving new results for 17\% of the projective spaces of dimension
between $2^{i}$ and $2^{i}+2^{14}$ for $i\ge15$. The seminal result of \cite{Ann},
\begin{equation}\label{Anneq}P^{2(m+\a(m)-1)}\notimm\br^{4m-2\a(m)},\end{equation}
appears 7063 times in the table, but is divided among four references. The first 4361 of them appeared in \cite{AD}, which obtained
a result equivalent to (\ref{Anneq}) for $P^n$ with $n$ satisfying a very complicated condition. The statement (\ref{Anneq})
was first conjectured in \cite{BD} and proved there for $\a(m)\le6$, which yielded 168 new results in this table. It was
extended to $\a(m)=7$ in \cite{W}, and this still applies to 700 values. This left 1834 values which were covered by the general result (\ref{Anneq})
and not by any of the three preceding references, and have not been bettered in subsequent work.

The first tmf-paper, (\cite{BDM}), appears 2866 times in the table; there are 110 additional values for small $\alpha(-)$ of tmf-implied
nonimmersions which were overlooked
in \cite{BDM} and noted in \cite{DM}. The other big collection of nonimmersion results is those obtained in \cite{KW} using $ER(2)$-cohomology,
which appears 2092 times.
Both $ER(2)$ and tmf can be considered as real-versions of $BP\langle2\rangle$. Using $ER(2)$ is advantageous because $ER(2)^*(P^n)$
has a 2-dimensional class, while $\tmf^*(P^n)$ only has an 8-dimensional class. Also $ER(2)$ is more closely related to $BP\langle2\rangle$,
and so, as W.~Stephen Wilson says, it can ``mooch" off the result (\ref{Anneq}). The advantage of tmf is that some of its groups are one 2-power larger
than those of $ER(2)$.

In \cite{Ann}, it was stated that (\ref{Anneq}) was within 2 dimensions of all known nonimmersion results, in the sense that
the two dimensions could come from the Euclidean space, the projective space, or a combination. In other words, if $D(n)$
denotes the nonimmersion dimension for $P^n$ obtained from (\ref{Anneq}), and $K(n)$ the best known nonimmersion dimension for $P^n$,
then, at the time, it was true that
\begin{equation}\label{KD2}K(n)\le \max(D(n)+2,\ D(n+1)+1,\ D(n+2)).\end{equation}
This is no longer true. There are 10 values of $n$ in the table for which the result of \cite{DZ}, which states that if $\a(n)=4$ and $n\equiv10$ mod 16
then $P^n\notimm\br^{2n-9}$, does not satisfy (\ref{KD2}), and there are 418 values of $n$ in the table for which Theorem 1.1(c) does not
satisfy (\ref{KD2}). These are the only results which are more than 2 stronger than (\ref{Anneq}) in the sense of (\ref{KD2}), and it is still true
that (\ref{Anneq}) is within 3 dimensions of all known results in the same sense. That is, the following statement is currently true.
$$K(n)\le\max(D(n)+3,\ D(n+1)+2,\ D(n+2)+1,\ D(n+3)).$$
The first example of (\ref{KD2}) not being satisfied occurs for $n=58$; we have $K(58)=107$ due to \cite{DZ} (which used modified Postnikov towers)
while $D(58)=D(59)=98$ and $D(60)=D(61)=106$. The first example of our \ref{mainthm}(c) causing (\ref{KD2}) to be not satisfied occurs from
$K(3584)=7129$ (due to \ref{mainthm}(c)) while $D(3584)=D(3585)=7124$ and $D(3586)=D(3587)=7128$.

Theorem \ref{mainthm} can be extended to larger values of $\a(M)$ similarly to what was done in \cite{BDM}.
We have emphasized the results for small values of $\a(M)$ for clarity of exposition. The extension, whose proof
we sketch in Section \ref{bigMsec}, is as follows. The lettering of the parts corresponds to the parts of Theorem
\ref{mainthm}.
\begin{thm}\label{bigMthm} Let $p(h)$ denote the smallest 2-power $\ge h$.
\begin{enumerate}
\item[b,e.] Suppose $\a(M)=4h+2$ and $h\le 2^{e_1}-2^{e_0}$ if $M\equiv 2^{e_0}+2^{e_1}\mod 2^{e_1+1}$ with $e_0<e_1$. Then
\begin{enumerate}
\item[b.] If $h$ is odd, $P^{8M+8h+1}\notimm\br^{16M-8h-3}$, and
\item[e.] If $h$ is even, then $P^{8M+8h+1}\notimm\br^{16M-8h-4}$ and $P^{8M+8h+2}\notimm\br^{16M-8h-3}$.
\end{enumerate}
\item[c.] If $\a(M)=4h+3$ with $h$ odd and $M\equiv0\mod p(h+1)$, then $P^{8M+8h+8}\notimm\br^{16M-8h+1}$ and
$P^{8M+8h+9}\notimm\br^{16M-8h+2}$.
\item[d.] If $\a(M)=4h+1$ with $h$ even and $M\equiv0\mod p(h+1)$, then $P^{8M+8h+9}\notimm\br^{16M-8h+12}$ and
$P^{8M+8h+10}\notimm\br^{16M-8h+13}$.
\end{enumerate}
\end{thm}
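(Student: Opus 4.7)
The plan is to imitate the proof of Theorem \ref{mainthm} given in Section \ref{pfsec}, but with the small numerical parameter $\a(M)\in\{3,6,7,9,10\}$ replaced by $\a(M)=4h+r$ for $r\in\{1,2,3\}$ and $h$ growing. As in \cite{BDM}, the nonimmersion $P^n\notimm\br^{n+k-1}$ is equivalent to the nonexistence of an axial map $P^n\times P^{k-1}\to P^\infty$, and an obstruction to such a map is a nonzero element in a relevant quotient of $\tmf^*(P^n\w P^{k-1})$. The precise $\tmf$-cohomology groups needed are $\tmf^*(P^{8M+\eps_1}\w P^{8k+\eps_2})$ for small $\eps_i$, including the mixed-parity cases whose computation forms the technical heart of Section \ref{pfsec}. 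The task is to generalize those computations so that the answer is expressed parametrically in $h$.

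First I would set up the Atiyah--Hirzebruch spectral sequence for $\tmf^*(P^{8M+\eps_1}\w P^{8k+\eps_2})$ and identify, for each part of Theorem \ref{bigMthm}, the specific putative obstruction class. Following the template of \cite{BDM}, such a class is (up to a unit) a sum whose coefficients are binomial expressions in $M$; its nontriviality modulo the image of the natural restriction map is governed by the 2-adic valuations of those binomials, which by Kummer's theorem are in turn read off from the binary expansion of $M$. The cases (b) vs.\ (e) and (c) vs.\ (d) are distinguished by the parity of $h$, which controls whether an additional $\Z/2$-extension is available, accounting for the one-dimensional shift between the paired conclusions.

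The main obstacle will be controlling, once $h$ is no longer bounded, the higher-filtration contributions and secondary summands that were negligible in Theorem \ref{mainthm}. The hypotheses are tailored precisely to exclude them: in (b,e) the condition $h\le 2^{e_1}-2^{e_0}$ pins down the lowest two bits of $M$ so that all auxiliary binomial coefficients acquire enough 2-divisibility to leave the principal term undisturbed, while in (c,d) the divisibility $M\equiv 0\mod p(h+1)$ plays the analogous role by forcing the bottom of the binary expansion of $M$ to be wide enough. I expect the bulk of the argument to consist of a careful 2-adic valuation estimate for sums of the form $\sum_j c_j\binom{M+j}{j}$ under these hypotheses, along with a check that no nonzero Atiyah--Hirzebruch differentials can hit (or issue from) the obstruction class.

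Once those estimates are in hand, the rest of the argument is mechanical: the nonzero obstruction in $\tmf^*$ rules out the axial map, yielding the advertised nonimmersion $P^{8M+8h+1}\notimm\br^{16M-8h-3}$ (and its odd-dimensional companion in the even-$h$ case), and similarly $P^{16M+8h+\cdot}\notimm\br^{32M-8h+\cdot}$ in parts (c,d). I would organize the write-up as a sketch, indicating the parametric form of the obstruction and the 2-adic valuation computation, and referring the reader to \cite{BDM} for the technical machinery that is reused verbatim.
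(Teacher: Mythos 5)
Your proposal reproduces the outer shell of the argument (obstruction to an axial map given by a binomial sum in $\tmf^*(P^m\w P^n)$, hypotheses arranged to force enough 2-divisibility on the auxiliary coefficients), but it is missing the two ideas that actually make the passage from bounded to unbounded $h$ work, and it substitutes the wrong spectral sequence. The paper's computations are carried out in the Adams spectral sequence with $E_2=\ext_{A_2}(H^*(-),\zt)$, not the Atiyah--Hirzebruch spectral sequence; the group orders (the heights of the $h_0$-towers in Diagrams \ref{diaga}--\ref{diagd}) and the filtration in which the obstruction lives are read off from $\ext_{A_2}$ charts. More importantly, your plan to ``generalize those computations so that the answer is expressed parametrically in $h$'' has no mechanism behind it: the explicit charts of Section \ref{pfsec} only reach filtration $\le 11$, and for general $h$ the obstruction sits in filtration $4h+1$ (resp.\ $4h+3$). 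The paper gets at these filtrations via the $v_2^8$-periodicity of $\ext_{A_2}$ from \cite{A2}, which shows that the portion of $\tmf_{i+48j}(P_a\w P_b)$ generated by filtration-0 classes repeats the pictured charts $8j$ units higher, together with cofiber sequences $S^a\w P_b\to P_a\w P_b\to P_{a+1}\w P_b$ to import the odd-bottom-cell results of \cite{BDM} into the mixed-parity cases. Without a periodicity statement of this kind you cannot know the structure of the target group for large $h$.

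The second gap is in the nonvanishing argument itself. Each intermediate coefficient $\binom{-M-h}{M-j}$, $0\le j\le h$, is divisible by $2^{\a(M)-1}=2^{4h+1}$ (this is where the hypothesis enters, via \cite[3.12]{BDM}), so the obstruction is a sum of $h+1$ classes all lying in filtration $\ge 4h+1$; individual 2-adic valuations do not decide whether that sum is zero, because the generators in filtration $4h+1$ satisfy relations --- in every fourth filtration the sum of the basic classes from the previous filtration vanishes (\cite[(2.10)]{BDM}, visible as the converging lines in Diagrams \ref{diagb}--\ref{diagd}). The paper's proof therefore needs the identity $\nu\bigl(\sum_{j=0}^h\binom hj\binom{-M-h}{M-j}\bigr)=\nu\bigl(\binom{-M}M\bigr)=\a(M)-1$, which guarantees that an \emph{odd} number of the surviving intermediate terms contribute, so the sum cannot be killed by that relation. (The c,d cases run the same way with $\binom{-M-1}{M}$ and filtration $4h+3$, which is the real source of the different hypothesis $M\equiv0\bmod p(h+1)$ --- not an ``extra $\zt$-extension'' tied to the parity of $h$.) Your ``careful 2-adic valuation estimate for sums'' gestures at this, but as stated it would only show each term is highly divisible, which is the opposite of what is needed.
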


\section{Proof of Theorem \ref{mainthm}}\label{pfsec}
Let tmf denote the 2-local connective spectrum introduced in \cite{HM}, whose mod-2 cohomology is the quotient of the mod-2
Steenrod algebra $A$ by the left ideal generated by $\sq^1$, $\sq^2$, and $\sq^4$. Thus $\tmf_*(X)$ may be computed by the Adams spectral
sequence (ASS) with $E_2=\ext_{A_2}(H^*X,\zt)$, where $A_2$ is the subalgebra of $A$ generated by $\sq^1$, $\sq^2$, and $\sq^4$.
We rely on Bob Bruner's software (\cite{Bru}) for our calculations of these Ext groups.
It was proved in \cite[p.167]{DM} that there are 8-dimensional classes $X$, $X_1$, and $X_2$ such that the
homomorphism in $\tmf^*(-)$ induced by an axial map $P^m\times P^n\to P^k$ effectively sends $X$ to $u(X_1+X_2)$, where $u$ is a unit
in $\tmf^0(P^m\times P^n)$ which will be omitted from our exposition.

We will often use duality isomorphisms $\tmf^i(P^n)\approx \tmf_{-i-1}(P_{-n-1})$ for $i>2$, and $\tmf^i(P^m\w P^n)\approx
\tmf_{-i-2}(P_{-m-1}\w P_{-n-1})$ for $i>\max(m,n)+ 2$. For any integer $m$, $P_m$ denotes the spectrum
$P_m^\infty$. We make frequent use of the periodicity $P_{b+8}^{t+8}\w\tmf\simeq \Sigma^8P_b^t\w\tmf$ proved in \cite[Prop 2.6]{BDM}.

We let $\nu(-)$ denote the exponent of 2 in an integer, and use $\nu\bigl(\binom mn\bigr)=\a(n)+\a(m-n)-\a(n)$.
Also, if $L$ is large, $\nu\bigl(\binom{2^L-k}n\bigr)=\nu\bigl(\binom{-k}n\bigr)=\nu\bigl(\binom{n+k-1}n\bigr)$. We will never be interested in the values
of odd factors of coefficients, and will not list them.

\begin{proof}[Proof of (a).] If the immersion exists, there is an axial map $P^{8M+9}\times P^{8M+9}\to P^{16M-1}$. The induced homomorphism
in $\tmf^*(-)$ sends $0=X^{2M}$ to \begin{equation}\label{sum1}\sum\tbinom{2M}iX_1^iX_2^{2M-i}\end{equation} in $\tmf^{16M}(P^{8M+9}\w P^{8M+9})$. This group is isomorphic to
$\tmf_{-2}(P_{-10}\w P_{-10})\approx\tmf_{30}(P_6\w P_6)$. The portion of the ASS for $\tmf_{30}(P_6\w P_6)$ arising from filtration 0
by $h_0$-extensions appears in Diagram \ref{diaga}.

\begin{center}
\begin{minipage}{6.5in}
\begin{diag}\label{diaga}{Portion of $\tmf_{30}(P_6\w P_6)$}
\begin{center}
\begin{picture}(30,50)
\def\mp{\multiput}
\def\elt{\circle*{3}}
\put(0,0){\line(1,0){30}}
\mp(15,0)(0,15){4}{\elt}
\put(15,0){\line(0,1){45}}
\mp(11,0)(8,0){2}{\line(0,1){15}}
\mp(11,0)(0,15){2}{\elt}
\mp(19,0)(0,15){2}{\elt}
\end{picture}
\end{center}
\end{diag}
\end{minipage}
\end{center}

\noindent There are several elements in higher filtration which are not relevant to our argument. The elements pictured in
Diagram \ref{diaga} cannot be hit by differentials in the ASS because in dimension 31 there is only one tower in low enough
filtration and it cannot support a differential by the argument of \cite[p.54]{BDM}, namely that its generator is a constructible
homotopy class. The filtration-0 elements must correspond to $X_1^{M-1}X_2^{M+1}$, $X_1^MX_2^M$, and $X_1^{M+1}X_2^{M-1}$
in $\tmf^{16M}(P^{8M+9}\w P^{8M+9})$. Since
\begin{equation}\label{one}2^2X^{M+1}=0\text{ in }\tmf^*(P^{8M+9}),\end{equation}
 the two $\bz/4$'s in Diagram \ref{diaga} must
represent $X_1^{M\pm1}X_2^{M\mp1}$, and multiples of these are 0 in all filtrations $>1$.
 Thus  $X_1^MX_2^M$ generates the $\bz/2^4$ in $\tmf^{16M}(P^{8M+9}\w P^{8M+9})$. Since $\nu\bigl(\binom {2M}M\bigr)=\a(M)=3$, we obtain that (\ref{sum1}) is nonzero,
contradicting the existence of the immersion.
\end{proof}

\begin{proof}[Proof of (b).] If the immersion exists, there is an axial map $P^{8M+9}\times P^{2^{L+3}-16M+9}\to P^{2^{L+3}-8M-11}$
for sufficiently large $L$. Hence
\begin{equation}\label{sum2}\sum\tbinom{-M-1}iX_1^iX_2^{2^L-M-1-i}=0\in\tmf^{2^{L+3}-8M-8}(P^{8M+9}\w P^{2^{L+3}-16M+9}).\end{equation}
This group is isomorphic to $\tmf_{38}(P_6\w P_6)$, and the relevant part of it is given in Diagram \ref{diagb}. Similarly to case (a),
and continuing in all remaining cases, it cannot be hit by a differential in the ASS.

\begin{center}
\begin{minipage}{6.5in}
\begin{diag}\label{diagb}{Portion of $\tmf_{38}(P_6\w P_6)$}
\begin{center}
\begin{picture}(30,80)
\def\mp{\multiput}
\def\elt{\circle*{3}}
\put(0,0){\line(1,0){30}}
\mp(12,0)(0,15){4}{\elt}
\mp(12,0)(6,0){2}{\line(0,1){45}}
\mp(7,0)(16,0){2}{\line(0,1){15}}
\mp(7,0)(0,15){2}{\elt}
\mp(23,0)(0,15){2}{\elt}
\put(12,45){\line(1,5){3}}
\put(18,45){\line(-1,5){3}}
\mp(18,0)(0,15){4}{\elt}
\mp(15,60)(0,15){2}{\elt}
\put(15,60){\line(0,1){15}}
\end{picture}
\end{center}
\end{diag}
\end{minipage}
\end{center}

The outer ($\bz/4$) generators must correspond to $X_1^{M-2}X_2^{2^L-2M+1}$ and $X_1^{M+1}X_2^{2^L-2M-2}$. (Note that 4 times
each of these classes is 0 by (\ref{one}), and so they cannot produce a higher-filtration component impacting the middle summands.
This will be the case also for the outer summands in subsequent diagrams.) The inner generators must
be $X_1^{M-1}X_2^{2^L-2M}$ and $X_1^{M}X_2^{2^L-2M-1}$. By \cite[Thm 2.7]{BDM}, the class $2^4(X_1^{M-1}X_2^{2^L-2M}+X_1^{M}X_2^{2^L-2M-1})$
is 0 in filtration 4, although it might be nonzero in filtration 5. This is depicted by the behavior of the chart
between filtration 3 and 4. Since $\a(M)=6$, the component of these terms in (\ref{sum2})
is $$\tbinom{-M-1}{M-1}X_1^{M-1}X_2^{2^L-2M}+\tbinom{-M-1}MX_1^{M}X_2^{2^L-2M-1}=2^5X_1^{M-1}X_2^{2^L-2M}+2^6X_1^{M}X_2^{2^L-2M-1},$$ which is nonzero in the group depicted by Diagram \ref{diagb}, contradicting the existence of the immersion.
\end{proof}

\begin{proof}[Proof of (c).] If the first immersion exists, there is an axial map $P^{16M+16}\times P^{2^{L+3}-32M+5}\to P^{2^{L+3}-16M-18}$. Hence
\begin{equation}\label{sum3}\sum\tbinom{-2M-2}iX_1^iX_2^{2^L-2M-2-i}=0\in\tmf^*(P^{16M+16}\w P^{2^{L+3}-32M+5}).\end{equation}
This group is isomorphic to $\tmf_{46}(P_7\w P_2)$, and the relevant part of it is given in the left side of Diagram \ref{diagc}.

\begin{center}
\begin{minipage}{6.5in}
\begin{diag}\label{diagc}{Portion of $\tmf_{46}(P_7\w P_2)$ and $\tmf_{46}(P_6\w P_3)$}
\begin{center}
\begin{picture}(160,110)
\def\mp{\multiput}
\def\elt{\circle*{3}}
\put(0,0){\line(1,0){35}}
\mp(12,0)(0,15){4}{\elt}
\mp(12,0)(6,0){2}{\line(0,1){45}}
\mp(7,0)(17,0){2}{\line(0,1){45}}
\mp(7,0)(0,15){4}{\elt}
\mp(24,0)(0,15){4}{\elt}
\mp(18,0)(0,15){4}{\elt}
\put(29,0){\elt}
\mp(12,45)(6,0){2}{\line(1,5){3}}
\mp(18,45)(6,0){2}{\line(-1,5){3}}
\put(21,60){\elt}
\mp(15,60)(0,15){4}{\elt}
\put(15,60){\line(0,1){45}}
\put(100,0){\line(1,0){35}}
\mp(112,0)(0,15){4}{\elt}
\mp(112,0)(6,0){2}{\line(0,1){45}}
\mp(107,0)(17,0){2}{\line(0,1){45}}
\mp(107,0)(0,15){4}{\elt}
\mp(124,0)(0,15){4}{\elt}
\mp(118,0)(0,15){4}{\elt}
\mp(129,0)(0,15){2}{\elt}
\put(129,0){\line(0,1){15}}
\mp(112,45)(6,0){2}{\line(1,5){3}}
\mp(118,45)(6,0){2}{\line(-1,5){3}}
\mp(121,60)(0,15){2}{\elt}
\put(121,60){\line(0,1){15}}
\mp(115,60)(0,15){4}{\elt}
\put(115,60){\line(0,1){45}}
\end{picture}
\end{center}
\end{diag}
\end{minipage}
\end{center}

The generators, from left to right, correspond to $X_1^{2M-2}X_2^{2^L-4M},\ldots,X_1^{2M+2}X_2^{2^L-4M-4}$, with the sum relation in
filtration 4 similar to that of the previous (and future) parts. Since $\a(M)=7$, the component of the middle terms in (\ref{sum3})
is $$2^{8+\nu(M)}X_1^{2M-1}X_2^{2^L-4M-1}+2^7X_1^{2M}X_2^{2^L-4M-2}+2^8X_1^{2M+1}X_2^{2^L-4M-3},$$ which is nonzero in
the group depicted by Diagram \ref{diagc}. The argument for the second nonimmersion involves the same sum in a  group
isomorphic to $\tmf_{46}(P_6\w P_3)$, which is pictured on the right side of Diagram \ref{diagc}.
\end{proof}

\begin{proof}[Proof of (d).] The proof is similar to those of parts (b) and (c). The first nonimmersion is proved by showing if $\a(M)=9$, then
\begin{equation}\label{sum4} \sum\tbinom{-4M-3}iX_1^iX_2^{2^L-4M-3-i}\ne0\in\tmf^{2^{L+3}-32M-24}(P^{32M+25}\w P^{2^{L+3}-64M+2}).\end{equation}
This group is isomorphic to $\tmf_{62}(P_6\w P_5)$, the relevant part of which is depicted in Diagram \ref{diagd},
with generators corresponding to $i=4M-3,\ldots,4M+3$ in (\ref{sum4}). The sum relation in filtration 8 follows
from \cite[Thm 2.7]{BDM}. The middle components of our class are
$$2^{10+\nu(M)}X_1^{4M-1}X_2^{2^L-8M-2}+2^9X_1^{4M}X_2^{2^L-8M-3}+2^9X_1^{4M+1}X_2^{2^L-8M-4},$$
which is nonzero in filtration 9. Note that $2^9X_1^{4M}X_2^{2^L-8M-3}$ is 0 in filtration 9, as can be seen from Diagram \ref{diagd} or from \cite[2.7]{BDM}, which says that if $g_1$, $g_2$, $g_3$ denote the middle three generators, then
there are relations that both $2^8(g_1+g_2+g_3)$ and $2^8(g_1+g_3)$ have filtration $>8$.

\begin{center}
\begin{minipage}{6.5in}
\begin{diag}\label{diagd}{Portion of $\tmf_{62}(P_6\w P_5)$}
\begin{center}
\begin{picture}(50,140)
\def\mp{\multiput}
\def\elt{\circle*{3}}
\put(0,0){\line(1,0){50}}
\mp(12,0)(0,15){4}{\elt}
\mp(12,0)(6,0){5}{\line(0,1){45}}
\put(7,0){\line(0,1){30}}
\mp(7,0)(0,15){3}{\elt}
\mp(24,0)(0,15){4}{\elt}
\mp(18,0)(0,15){4}{\elt}
\mp(30,0)(0,15){4}{\elt}
\mp(36,0)(0,15){4}{\elt}
\mp(41,0)(0,15){2}{\elt}
\mp(12,45)(6,0){4}{\line(1,5){3}}
\mp(18,45)(6,0){4}{\line(-1,5){3}}
\mp(21,60)(0,15){4}{\elt}
\mp(27,60)(0,15){4}{\elt}
\mp(33,60)(0,15){2}{\elt}
\mp(15,60)(0,15){3}{\elt}
\put(15,60){\line(0,1){30}}
\mp(21,60)(6,0){2}{\line(0,1){45}}
\put(33,60){\line(0,1){15}}
\put(21,105){\line(1,5){3}}
\put(27,105){\line(-1,5){3}}
\put(24,120){\line(0,1){15}}
\mp(24,120)(0,15){2}{\elt}
\put(41,0){\line(0,1){15}}
\end{picture}
\end{center}
\end{diag}
\end{minipage}
\end{center}

The argument for the second nonimmersion is virtually identical. Its obstruction is the same sum in a group
isomorphic to $\tmf_{62}(P_5\w P_6)$, so just the reverse of Diagram \ref{diagd}.
\end{proof}

\begin{proof}[Proof of (e).] The obstruction this time is $\sum\binom{-2M-2}iX_1^iX_2^{2^L-2M-2-i}$ in a group
isomorphic to the one depicted in Diagram \ref{diagd}. The middle terms are
$$2^9X_1^{2M-2}X_2^{2^L-4M}+2^{11+\nu(M)}X_1^{2M-1}X_2^{2^L-4M-1}+2^{10}X_1^{2M}X_2^{2^L-4M-2},$$
which is nonzero.\end{proof}

\section{Sketch of proof of Theorem \ref{bigMthm}}\label{bigMsec}
We use the $v_2^8$-periodicity of $\ext_{A_2}$ proved in \cite[p.299,Thm 5.9]{A2} to see that,
if one of the diagrams
of Section \ref{pfsec} depicts a portion of $\tmf_i(P_a\w P_b)$, then the top part of the portion of $\tmf_{i+48j}(P_a\w P_b)$
generated by filtration-0 classes has the same form $8j$ units higher. We also use the arguments on \cite[p.54]{BDM}
to see that, when this portion is interpreted as a quotient of a $\tmf^k(P^c\w P^d)$ group, the relations are of the same sort
as those in \cite[Thm 2.7]{BDM}. The relation \cite[(2.10)]{BDM} is especially important and will be noted specifically below.
We use cofiber sequences such as $S^a\w P_b\to P_a\w P_b\to P_{a+1}\w P_b$ to deduce results for our spaces, in which at least one
of the bottom dimensions is even, from those of \cite{BDM}, which dealt with the situation when both bottom dimensions are odd.
The nice form of $\ext_{A_2}(H^*P_b)$ below a certain line of slope 1/6 is important here. As noted on
\cite[p.54]{BDM}, it is just a sum of copies of $\ext_{A_1}(\zt)$, suitably placed.

\begin{proof}[Proof of \ref{bigMthm}(b,e).] If the immersion in (b) exists, there is an axial map
$$P^{8M+8h+1}\times P^{2^{L+3}-16M+8h+1}\to P^{2^{L+3}-8M-8h-3}.$$
We obtain a contradiction to this by showing
\begin{equation}\sum\tbinom{-M-h}iX_1^iX_2^{2^L-M-h-i}\ne0\in\tmf^*(P^{8M+8h+1}\w P^{2^{L+3}-16M+8h+1}).\label{sum5}\end{equation}
Our obstruction will be in filtration $4h+1$, where there is a nonzero class by $v_2^8$-periodicity from Diagram \ref{diagb},
which is the case $h=1$. Note that the group in which (\ref{sum5}) lies is isomorphic to $\tmf_{24h+14}(P_6\w P_6)$.
The terms in (\ref{sum5}) with $i>M$ cannot interfere in this filtration because for such $i$, $2^{4h-2}X_1^i=0$ in
$\tmf^*(P^{8M+8h+1})$. The same holds for terms with $i<M-h$ due to the second factor. By \cite[3.12]{BDM}, the coefficients
of the terms in (\ref{sum5}) with $M-h\le i\le M$ are all divisible by $2^{\a(M)-1}=2^{4h+1}$. This is where the strange hypothesis
comes into play. Next we note that
$$\nu\bigl(\sum_{j=0}^h\tbinom h{j}\tbinom{-M-h}{M-j}\bigr)=\nu\bigl(\tbinom{-M}M\bigr)=\a(M)-1.$$
By a variant on \cite[Cor 2.13.3]{BDM}, this implies that (\ref{sum5}) is nonzero. There are four things that are required to make this
work. (a) No interference from the outer terms because they are precisely 0 in a lower filtration. (b) All the $h+1$ intermediate
terms have filtration at least $4h+1$. (c) The chart is nonzero in filtration $4h+1$. (d) An odd number of the intermediate terms
which have $\binom hj$ odd, $0\le j\le h$, are nonzero in filtration $4h+1$. This latter is a version of \cite[(2.10)]{BDM}. It is a consequence of
a relation in every fourth filtration that the sum of the basic classes in the previous filtration is 0 in that filtration.
By ``basic,'' we mean those obtained from canonical classes in filtration 0 or 4 by $v_2^8$ periodicity.

The proof of (e) is virtually identical.\end{proof}

\begin{proof}[Proof of \ref{bigMthm}(c,d).] The proof of (d) is virtually identical to that of (c), and this is similar to that
of (b) with the main difference being that the obstruction is due to $\binom{-M-1}M$ instead of $\binom{-M}M$, which causes a
very different-looking hypothesis. The contradiction to the first result of (c) is obtained by showing
\begin{equation}\label{sum6}\sum\tbinom{-M-h-1}iX_1^iX_2^{2^L-M-h-1-i}\ne0\in\tmf^*(P^{8M+8h+8}\w P^{2^{L+3}-16M+8h-3}).\end{equation}
The obstruction will be in filtration $\a(M)=4h+3$. The terms with $i>M$ or $i<M-h$ are precisely 0 in  filtration less than $4h+3$ due to
their first or second factor. By our hypothesis and \cite[3.8]{BDM}, the intermediate terms are all divisible by $2^{\a(M)}$.
Since $$\nu\bigl(\sum_{j=0}^h\tbinom h{j}\tbinom{-M-h-1}{M-j}\bigr)=\nu\bigl(\tbinom{-M-1}M\bigr)=\a(M),$$
and, by $v_2^8$-periodicity from Diagram \ref{diagc}, the obstruction group is nonzero in filtration $\a(M)=4h+3$.
\end{proof}

\def\line{\rule{.6in}{.6pt}}


\begin{thebibliography}{99}

\bibitem{AD} L.Astey and D.M.Davis, {\em Nonimmersions of real projective spaces implied by BP}, Bol Soc Mat Mex {\bf 25} (1980) 15-22.
\bibitem{BD} M.Bendersky and D.M.Davis, {\em Unstable $BP$-homology and desuspensions}, Amer Jour Math {\bf 107} (1985) 833-852.
\bibitem{Bru} R.R.Bruner, {\tt http://math.wayne.edu/}$\sim${\tt rrb/cohom}
\bibitem{BDM} R.R.Bruner, D.M.Davis, and M.Mahowald, {\em Nonimmersions of real projective spaces
implied by tmf}, Contemp Math {\bf 293} (2002) 45-68.
\bibitem{tbl} D.M.Davis, Table of immersions and embeddings of real projective spaces, {\tt http://www.lehigh.edu/}$\sim${\tt
dmd1/immtable}
\bibitem{Ann} \line, {\em A strong nonimmersion theorem for real projective spaces}, Annals of Math {\bf 120} (1984) 517-528.
\bibitem{DM} D.M.Davis and M.Mahowald, {\em Nonimmersions of $RP^n$ implied by tmf, revisited}, HHA {\bf 10} (2008) 151-179.
\bibitem{A2} \line, {\em Ext over the subalgebra $A_2$ of the Steenrod algebra for stunted projective spaces}, Can Math Soc Conf Proc {\bf 2} (1982)
297-342.
\bibitem{DZ} D.M.Davis and V.Zelov, {\em Some new embeddings and nonimmesions of real projective spaces}, Proc Amer Math Soc {\bf 128} (2000) 3731-3740.
\bibitem{HM} M.J.Hopkins and M.Mahowald, {\em From elliptic curves to homotopy theory}, {\tt http://hopf.math.purdue.edu/Hopkins-Mahowald/eo2homotopy.pdf}.
\bibitem{J} I.M.James, {\em On the immersion problem for real projective spaces}, Bull Amer Math Soc {\bf 69} (1963) 231-238.
\bibitem{KW} N.Kitchloo and W.S.Wilson, {\em The second real Johnson-Wilson theory and nonimmersions of $RP^n$, I and II},
HHA {\bf 10} (2008) 223-290.
\bibitem{W} W.S.Wilson, {\em Brown-Peterson metastability and the Bendersky-Davis conjecture}, Publ RIMS Kyoto {\bf 20} (1984) 1037-1051.
\end{thebibliography}
\end{document}